\newtheorem{theorem}{Theorem}
\newtheorem{corollary}{Corollary}
\newtheorem{lemma}{Lemma}
\newtheorem{proposition}{Proposition}
\newtheorem{remm}{Remark}
\newenvironment{remark}{\begin{remm}\rm }{\hfill \hspace*{1pt} \hfill
$\star$\end{remm}}
\newcommand{\nt}{{\mathbb N}}
\newcommand{\real}{{\mathbb R}}
\newcommand{\R}{{\mathbb R}}
\newcommand{\Ltwo}{{\mathcal{L}^2}}
\newcommand{\Ck}{{\mathcal{C}^k}}
\title{\LARGE \bf
Semi-definite programming and functional inequalities for Distributed Parameter Systems
}
\author{ G. Valmorbida\and M. Ahmadi\and A. Papachristodoulou
\thanks{Department of Engineering Science, University of Oxford,  17 Parks Road, OX1 3PJ Oxford, United Kingdom,  Email: \{giorgio.valmorbida,~mohamadreza.ahmadi,~antonis\}@eng.ox.ac.uk. G. Valmorbida is also affiliated to Somerville College, University of Oxford, Oxford, U.K. Work supported by EPSRC grant EP/J010537/1. A. Papachristodoulou was supported in part by the Engineering and Physical Sciences Research Council projects EP/J012041/1, EP/I031944/1 and EP/J010537/1. M. Ahmadi is supported by Oxford Clarendon Scholarship and Keble College Sloane-Robinson Scholarship.
} 
}
\begin{document}

\maketitle
\thispagestyle{empty}
\pagestyle{empty}

\begin{abstract}
We study one-dimensional integral inequalities, with quadratic integrands, on bounded domains. Conditions for these inequalities to hold are formulated in terms of function matrix inequalities which must hold in the domain of integration. For the case of polynomial function matrices, sufficient conditions for positivity of the matrix inequality and, therefore, for the integral inequalities are cast as semi-definite programs. The inequalities are used to study stability of linear partial differential equations. 

\emph{Keywords: Sum of Squares, Stability Analysis, Distributed Parameter Systems, PDEs, }
\end{abstract}

\section{Introduction}
\label{sec:intro}
Emerging applications~\cite{BPWB13,CWPD13, VSK08,VZ09,CA98} (Magnetohydrodynamics, fluids, population dynamics) and stringent performance requirements have recently driven control engineering researchers interest towards systems described by partial differential equations (PDEs), that is, equations involving derivatives with respect to more than a single independent variable. Usually the set of independent variables are time and spatial variables, and the solution to the PED solution is assumed to be forward complete, meaning that the domain is unbounded for the temporal variable. On the other hand, solutions to equations representing physical systems are often defined in a bounded \textit{spatial domain}. 

Several numerical approaches for the analysis and control design of PDE systems rely on ODEs, obtained by spectral truncation or spatial discretization, approximating the PDE model with a finite number of states~\cite{GC12},~\cite{FAC03}.  As for ODEs,  conditions for stability of the zero solution  can be formulated from spectral analysis when the PDE system is defined by a linear operator. Moreover it is possible to infer stability from the semi-group generated by linear or nonlinear operators and this parallels the ODE approach of obtaining a solution to establish stability of a particular solution~\cite{CZ95}. An alternative approach is to rely on the Lyapunov method, extended to infinite dimensional systems in~\cite{Mov59} and~\cite{Dat70}, which does not require the semi-groups to be calculated. The energy of the state, which for PDEs takes values in a function space instead of an Euclidean one, is a frequent choice for the Lyapunov functional (LF) since it simplifies the analysis of a large class of  nonlinear PDE systems whenever the nonlinearities are energy-preserving~\cite{Str04}. However, using fixed LFs may be conservative and is preferable to consider a family of parameterised functionals. The choice for the class of parameterised functionals should be supported by a Lyapunov converse theorem.  

Even for one-dimensional spatial domain PDEs, the current development of Lyapunov analysis rely on analytical steps~\cite{Str04}. These steps present increasing complexity for systems of several dependent variables, for systems with spatially varying properties (anisotropic systems) and for LF integrands depending on the spatial variable. 

Semi-definite programming (SDP) has recently been successfully applied to control problems with polynomial data being formulated as convex optimization problems. Among those, one can cite stability of time-delay systems~\cite{PPL07}, synthesis  of polynomial control  laws~\cite{VTG13}~\cite{PPW04}, robustness analysis of polynomial systems~\cite{TPSB10} giving SOS programs (SOSP), while the primal formulation of the SOSP, the generalised problem of moments~\cite{Las09}, has been applied to optimal control problems~\cite{LHPT08} and system analysis~\cite{HK14}.

While the connection of polynomial inequalities to semi-definite constraints was possible thanks to the non-uniqueness of quadratic-like representation of polynomials (parametrised by Gram matrices~\cite{CLR95}) the non-uniqueness of integral expressions with integrands being quadratic expressions on the dependent variables has not yet been explored. A hint on this direction for integral operators was reported in~\cite{PP06}, where the use of integration by parts associated to Dirichlet boundary condition was instrumental to formulate the stability test for a PDE with dissipation and reaction terms as an SOSP. 

With the purpose of formulating numerical tests for the analysis of PDE systems, this paper studies one-dimensional integral inequalities whose integrands are functions of the independent spatial variables, of the dependent variables and their spatial derivatives. The integrand is assumed to be \emph{quadratic} on the dependent variable and \emph{polynomial} on the spatial variable.

The fundamental theorem of calculus (FTC) is the key step to relate the dependent variables and their derivatives in an integral expression. This step allows us to obtain a set of quadratic expressions which do not affect the positivity of the integral. The matrices on these quadratic expressions depend on the spatial variables and their entries relate to the values the dependent variables assume on the boundaries. The positivity check of the integral on the domain is then performed by a check of the positivity of the matrix inequalities, involving the quadratic expression on the original inequality and the ones obtained with the FTC. For polynomial matrices on the independent variables, we rely on the Positivstellensatz~\cite{Put93} in order to generate SOS programs yielding, therefore, a problem to be solved numerically. 


The above results are then applied to study the stability of the $\mathcal{L}_2$ norm of systems of anisotropic PDEs with weighted $\mathcal{L}_2$ norm as LF candidates. Several numerical examples illustrate the results: bounds for the Poincar\'e inequalities are derived numerically, the stability of the heat equation with spatially varying coefficients is studied, the transport equation, and a set of reaction-diffusion equation~\cite{Str04}, leading to integral inequalities whose integrand is a quadratic expressions on the dependent variable. 



\textit{Notation} Let $\R, \R_{\ge 0}, \R_{>0}$ and $\R^{n}$ denote the field of reals, non-negative reals, positive reals and the $n$-dimensional Euclidean space respectively. The sets of natural numbers and positive natural numbers are denoted $\nt^n$, $\nt^n_0$. The closure of set $\Omega$ is denoted~$\overline{\Omega}$. The boundary $\partial \Omega$ of set $\Omega$ is defined as $\overline{\Omega} \setminus \Omega$ with ``$\setminus$'' denoting set substraction. The ring  of polynomials, the ring of positive polynomials, and the ring of sum-of-squares polynomials on real variable $x \in \real$ are respectively denoted $\mathcal{R}[x]$, $\mathcal{P}[x]$ and $\Sigma[x]$. The ring of Sum-of-squares matrices of dimensions $n$ is denoted $\Sigma^{n \times n}[x]$. The set of functions in a Hilbert space $H$ on $\Omega$ are denoted $H(\Omega)$. We denote the the space of measurable functions defined on $\Omega$ as $u \in \Ltwo_{\Omega}$ we denote the spatial $\Ltwo_\Omega$-norm by $\|u(t)\|_{2,\Omega} = \left(\int_{\Omega} u^{T}(t,x)u(t,x) dx \right)^{\frac{1}{2}}$ we use $\Ltwo_{P,\Omega}$ to denote the weighted $\Ltwo$ norm $\|u(t)\|_{(2,P),\Omega} = \left(\int_{\Omega} u^{T}(t,x)P(x)u(t,x) dx \right)^{\frac{1}{2}}$. The set of continuous functions mapping $\Omega$ into $\real^{n}$,  $k$-times differentiable and with continuous derivatives is denoted $\Ck(\Omega)$. For $p\in \mathcal{C}^1(\Omega)$, the derivative of $p$ with respect to variable $x$ is denoted $\frac{\partial p}{\partial x} = \partial_x p  = p_x$. For $u \in \mathcal{C}^{k}$, $\alpha \in \nt^n_0$, define \begin{small}$$D^{\alpha}u := \left(u_1,\frac{\partial{u_{1}}}{\partial x},  \ldots, \frac{\partial^{\alpha_1}{u_{1}}}{\partial x^{\alpha_1}}, \ldots, \frac{\partial{u_{n}}}{\partial x}, \ldots, \frac{\partial^{\alpha_n}{u_{n}}}{\partial x^{\alpha_n}} \right).$$ \end{small}
Define the \emph{order} of $D^{\alpha}u$  as $ord(D^{\alpha}u) := \max_{j} \alpha_j$. We use $He(\cdot)$ to denote the linear operator $He(A) = A + A^{T}$. For a symmetric matrix $A$  denote $A\geq 0$ ($A>0$) if $A$ is positive definite  (semi-definite). The set of eigenvalues of a matrix $P$ is denoted $\lambda(P)$Elementwise product of two vectors $a$, $b$ is denoted $a \odot b$ while elementwise inequality  is denoted $a \preceq b$.

Consider   $\alpha_\theta= \theta \bf{1}_n$, $\theta \in \nt$, define
\begin{equation} 
v_\theta(u(x)) := D^{\alpha_\theta}u. 
\end{equation} 
The vector $v_\theta$ contains all derivatives of variable $u$ with respect to variable $x$ up to order $\theta$. Variable $u$ is the \emph{dependent variable} and $x\in \Omega \subset \real$ the \emph{independent variable}. 

\section{ Positive functionals and polynomial integrands}
\label{sec:positivefunctionals}

In this paper integral inequalities of the form
\begin{equation}
\int_\Omega \bar{f}(x,v_\theta(u)) dx \geq 0,
\label{ineq:integral}
\end{equation}
are studied, with $\Omega = \left[0,1\right]$. It is assumed that $ \bar{f}(\cdot,v_\theta) \in \mathcal{R}[v_\theta]$, \emph{i.e.} $\bar{f}$ is quadratic on the second argument for any value the first argument assumes, therefore it is possible to write
\begin{equation}
 \bar{f}(x,v_\theta(u))  = v_\theta^{T}(u)F_\alpha (x)v_\theta(u).
\label{eq:gpolynomial}
\end{equation}
It is further assumed that  $F(x) \in \mathcal{C}^{0}(\Omega)$. At the boundary, the dependent variable $u(x)$ takes values satisfying the following linear equation. 
\begin{equation}
B\left[\begin{array}{c} v_{\theta-1}(1) \\v_{\theta-1}(0) \end{array}\right] = 0,
\end{equation}
with $B \in \real^{n_b \times 2 n(\theta-1)}$.

The remaining of this section aims to derive conditions for~\eqref{ineq:integral} to hold in terms of expressions involving only the integrand $\bar{f}(x,v_\theta) $. To this aim, the following result is fundamental

\begin{lemma} \label{lemma:IP}
Consider $r:\Omega  \rightarrow \real^{n_r}$, $r \in \mathcal{C}^{1}$. If there exists a vector function $h:\Omega  \rightarrow \real^{n_r}$, $h \in \mathcal{C}^{1}$ satisfying $h^{T}(x)r(u(x))\leq 0 $ for $x \in \partial \Omega$, then
\begin{equation} \label{eq:IP}
\int_\Omega \left[ h_x (x) r\left( x \right) + h (x) r_x\left( x \right) \right] \,\, dx \leq  0
\end{equation}
\end{lemma}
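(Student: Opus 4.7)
The plan is to recognize the integrand of \eqref{eq:IP} as the total $x$-derivative of the scalar $\varphi(x) := h^T(x)\,r(u(x))$ and then invoke the fundamental theorem of calculus, turning the integral into a sum of boundary evaluations whose sign is controlled by the hypothesis on $h^T r$ restricted to $\partial\Omega$.

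Concretely, since both $h$ and $r$ are $\mathcal{C}^1$, $\varphi$ is continuously differentiable on $\overline{\Omega}$, and the product rule yields
$$\frac{d\varphi}{dx}(x) \;=\; h_x^T(x)\,r(u(x)) \;+\; h^T(x)\,r_x(u(x)),$$
which matches the integrand in \eqref{eq:IP} once the slightly abbreviated arguments $r(x)$, $r_x(x)$ in the lemma statement are read as $r(u(x))$ and its total $x$-derivative. Applying the FTC on $\Omega=[0,1]$ then gives
$$\int_\Omega \frac{d\varphi}{dx}\,dx \;=\; h^T(1)\,r(u(1)) \;-\; h^T(0)\,r(u(0)),$$
and the proof reduces to showing that this right-hand side is non-positive under the stated boundary hypothesis.

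The main (and essentially only) delicate point is the sign bookkeeping at $\partial\Omega$. Taken literally, the condition ``$h^T(x)\,r(u(x))\le 0$ for $x\in\partial\Omega$'' controls the absolute sign at both endpoints, but the FTC produces a \emph{signed} combination (minus at $x=0$, plus at $x=1$). The intended reading, consistent with an outward-normal convention on $\partial\Omega=\{0,1\}$, is that $h^T(1)\,r(u(1))\le 0$ and $-h^T(0)\,r(u(0))\le 0$, so that each boundary contribution is non-positive and their sum yields \eqref{eq:IP}. Once this convention is fixed, the lemma is really just a repackaged FTC: no further estimates, multi-dimensional integration by parts, or regularity beyond $\mathcal{C}^1$ are required, and that simplicity is exactly what makes it useful as the building block for injecting boundary information into the SOS-based integral inequalities developed in the rest of the paper.
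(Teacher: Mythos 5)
Your proof is correct and follows essentially the same route as the paper: the integrand is the total derivative of $h^T(x)r(u(x))$, the fundamental theorem of calculus converts the integral into the boundary evaluation, and the hypothesis on $\partial\Omega$ forces that evaluation to be non-positive. You are also right to flag the sign bookkeeping: the paper's own proof writes the boundary term as $h(x)r(x)|_{\partial\Omega}$ and silently treats the hypothesis as a bound on the \emph{signed} difference $h^T(1)r(u(1))-h^T(0)r(u(0))$ (this is exactly how the condition is used later, e.g.\ in Example~1 with periodic conditions and in the transport-equation example), so your outward-normal reading is the intended one and closes the only gap in the statement as literally written.
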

\begin{proof}
From the fundamental theorem of calculus, one has
\begin{multline*}
h (x)r(x)|_{\partial \Omega} = \int_\Omega \left[ \frac{d}{dx} \left( h (x)r(x)  \right)\right]  \,\, dx  \\ = \int_\Omega \left[ h_x (x) r\left(x \right) + h (x) r_x\left(x \right)  \right]  \,\, dx
\end{multline*}
since $h(x)r(x)\leq 0$ for $x \in \partial \Omega$ one obtains~\eqref{eq:IP}.
\end{proof}

Whenever  $r(x)$ is  a vector of monomials on the elements of $v_\theta(u)$, the integrand in~\eqref{eq:IP} relates the monomials \emph{explicitly} accounting for the dependence of $u$ on variable~$x$ as follows:

\begin{corollary} \label{cor:IP}
Consider  $ v_{\theta-1}^{\{2\}}(u)$, the vector containing all monomials of degree $2$ on $ v_{\theta-1}$, and the set
\begin{multline}
\label{eq:Hset}
\mathcal{H}(k,\theta) \\ :=\left\lbrace  h \in \mathcal{C}^{1}(\Omega) : h(x) \odot  v_{\theta-1}^{\{2\}}(u)|_{\partial \Omega } \preceq  0  \right\rbrace.
\end{multline}
If $h(x) \in \mathcal{H}$, then 
\begin{multline} \label{eq:IPmonomials}
\int_\Omega  \bar{h}(x, v_{\theta}(u))  dx \\ :=  \int_\Omega \left[ h_x(x) \odot v_{\theta-1}^{\{2\}}(u)+ h(x) \odot  C v_{\theta}^{\{2\}}(u)  \right] \,\, dx \preceq  0
\end{multline}
where $C$ is the matrix satisfying $\frac{\partial v_{\theta-1}^{\{2\}}(u)}{\partial x} = C   v_{\theta}^{\{2\}}(u)$.
\end{corollary}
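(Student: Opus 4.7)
The plan is to reduce the claim to $n_r$ componentwise applications of Lemma~\ref{lemma:IP}. First, for each index $i$ I would set $r_i(x) := [v_{\theta-1}^{\{2\}}(u(x))]_i$; by construction this is a product of two entries of $v_{\theta-1}(u(x))$, each of which is a spatial derivative of $u$ of order at most $\theta-1$, so $r_i \in \mathcal{C}^1(\Omega)$ whenever $u \in \mathcal{C}^{\theta}(\Omega)$. The condition $h \in \mathcal{H}(k,\theta)$ from~\eqref{eq:Hset} then guarantees, componentwise, that each scalar pair $(h_i, r_i)$ satisfies the boundary sign hypothesis required by Lemma~\ref{lemma:IP}.

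Next I would identify the matrix $C$ explicitly. Writing $r_i = [v_{\theta-1}]_j\,[v_{\theta-1}]_k$ for suitable indices $j,k$, the product rule gives
\begin{equation*}
\partial_x r_i = (\partial_x [v_{\theta-1}]_j)\,[v_{\theta-1}]_k + [v_{\theta-1}]_j\,(\partial_x [v_{\theta-1}]_k).
\end{equation*}
Each entry of $v_{\theta-1}(u)$ has the form $\partial_x^\ell u_p$ with $\ell \leq \theta-1$, so its $x$-derivative is $\partial_x^{\ell+1} u_p$, which is itself an entry of $v_\theta(u)$. Thus $\partial_x r_i$ is a linear combination of pairwise products of entries of $v_\theta(u)$, hence a linear combination of entries of $v_\theta^{\{2\}}(u)$. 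Collecting these coefficients row by row produces a constant matrix $C$ with $\partial_x v_{\theta-1}^{\{2\}}(u) = C v_\theta^{\{2\}}(u)$.

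To close the argument, I would apply Lemma~\ref{lemma:IP} to each scalar pair $(h_i, r_i)$ to obtain
\begin{equation*}
\int_\Omega \bigl[ (h_i)_x(x)\, r_i(x) + h_i(x)\, \partial_x r_i(x) \bigr] \, dx \leq 0,
\end{equation*}
substitute $\partial_x r_i = [C v_\theta^{\{2\}}(u)]_i$, and stack the resulting scalar inequalities into a vector inequality using the elementwise product $\odot$; the outcome is exactly~\eqref{eq:IPmonomials}. I do not anticipate a serious obstacle: the construction of $C$ is essentially bookkeeping (product rule together with the closure of $v_{\theta-1}$ under differentiation inside $v_\theta$), and the boundary condition encoded in $\mathcal{H}(k,\theta)$ is tailored precisely so that each scalar application of Lemma~\ref{lemma:IP} succeeds. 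The only point to handle carefully is the interpretation of $\preceq 0$ at $\partial\Omega$ as the signed boundary evaluation $h_i(1)r_i(1) - h_i(0)r_i(0) \leq 0$, consistent with the usage in Lemma~\ref{lemma:IP}.
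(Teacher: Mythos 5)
Your proposal is correct and follows essentially the same route as the paper, which proves the corollary in one line by taking $r = v_{\theta-1}^{\{2\}}(u)$ in Lemma~\ref{lemma:IP}; you simply make explicit the componentwise application, the product-rule construction of $C$, and the signed reading of the boundary condition that the paper leaves implicit.
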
 

The corollary is straightforwardly proven by considering $r(x) = v_{\theta}^{\{2\}}$ in~\eqref{eq:IP}. 

\noindent
{\bf Example 1} Consider $\Omega = [0,1]$, $u = u_1$, that is, $n = 1$ and take $\theta = 2$. The set in~\eqref{eq:Hset}, is defined with  $v_{\theta}^{\{2\}} =(u(x)^2, u(x)u_x(x), u_x^2(x))$. Consider $u(0) = u(1) = 0$. The hypothesis of Corollary~\ref{cor:IP} holds with $h(x) = (h_1(x), h_2(x),h_3(x))$ satisfying $h_3(0)\leq 0$ and $h_3(1) \leq 0$ and arbitrary values for $h_1$ and $h_2$ at the boundaries since $u(1)^2 =  u(0)^2 = u(1)u_x(1) = u(0)u_x(0) = 0$. If the values at the boundaries are given by  $u(0) = u(1)$,  $u_x(0) = u_x(1)$, the hypothesis is satisfied with $h_1(1) - h_1(0) \leq 0 $, $h_2(1) - h_2(0) = 0$ and $h_3(1)-h_3(0) \leq 0$. 

\begin{remark}
The parametrization~\eqref{eq:Hset} is defined in terms of the values $h$  assumes at the boundaries of the domain. Thus the integrand $\bar{h}$ in~\eqref{eq:IPmonomials}, which is a vector of $n_r$ elements, can be instrumental to verify~\eqref{ineq:integral}  since if $$\int_\Omega \bar{f}(x,v_\theta(u)) + \sum_i^{n_r} \bar{h}_i(x,v_\theta(u)) dx \geq 0$$ holds then, clearly, $\int_\Omega \bar{f}(x,v_\theta(u)) dx \geq 0$.
\end{remark}

Since~\eqref{eq:gpolynomial} and $\bar{h}(h, v_{\theta}(u)) $ in~\eqref{eq:IPmonomials} are quadratic functions on the dependent variables $v_\theta$, one can write
\begin{equation}
\label{eq:quadform}
\begin{array}{rcl}
\bar{f}(x, v_{\theta}(u)) & = & v_\theta^{T} F(x)  v_\theta\\
 \sum_i^{n_r}  \bar{h}(x, v_{\theta}(u)) & = & v_\theta^{T} H(x)  v_\theta
\end{array}
\end{equation}
with $\bar{k} = \left\lceil \frac{k}{2}\right\rceil$.

{\bf Example 2} Consider $h(x)$ and $v_{\theta}^{\{2\}}$ as in Example~1, then matrix $H(x)$ in~\eqref{eq:quadform} is given by
\begin{multline*}
H(x) =  \left[\begin{array}{ccc}  \partial_x h_{1} &  h_{1} + \frac{1}{2}  \partial_x h_{2} &  \frac{1}{2} h_{2} \\ h_{1}+\frac{1}{2}  \partial_x h_{2} &   h_{2}+   \partial_x h_{3} & h_{3} \\ \frac{1}{2} h_{2} &  h_{3} &0 \end{array}\right].
\end{multline*}

\begin{remark}
Recall that, from the definition of $\mathcal{H}(k,\theta)$, information about the values of the dependent variables at the boundaries define the values at the boundary of the entries of $H(x)$ . 
\end{remark}

\begin{proposition}
\label{prop:integrandSDP}
If  $\exists h \in \mathcal{H}$ (as in~\eqref{eq:Hset}), such that
\begin{equation}
T(x) := F(x) + H(x) \geq 0 \quad \forall x \in \Omega 
\label{eq:Tx}
\end{equation}
with $F(x)$ and $H(x)$ as in~\eqref{eq:quadform}, then inequality~\eqref{ineq:integral} holds.
\end{proposition}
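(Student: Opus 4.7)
The plan is to chain together pointwise positivity with the inequality from Corollary~\ref{cor:IP}. The key observation is that the proposition is essentially a Lagrangian-style relaxation: the terms contributed by $h \in \mathcal{H}$ integrate to something non-positive and can therefore be freely added to $\bar{f}$ without harming positivity of the integral.

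First I would use the quadratic decomposition in~\eqref{eq:quadform} to rewrite the hypothesis pointwise: since $T(x) = F(x) + H(x) \succeq 0$ for every $x \in \Omega$, one has
\begin{equation*}
v_\theta^T(u(x))\, T(x)\, v_\theta(u(x)) \;=\; \bar{f}(x, v_\theta(u)) + \sum_{i=1}^{n_r} \bar{h}_i(x, v_\theta(u)) \;\ge\; 0
\end{equation*}
for all $x \in \Omega$ and for any trajectory $u$ with the required smoothness. Integrating this non-negative scalar over $\Omega$ preserves the inequality, so
\begin{equation*}
\int_\Omega \bar{f}(x, v_\theta(u))\, dx + \int_\Omega \sum_{i=1}^{n_r} \bar{h}_i(x, v_\theta(u))\, dx \;\ge\; 0.
\end{equation*}

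Next I would invoke Corollary~\ref{cor:IP}: since $h \in \mathcal{H}(k, \theta)$ by assumption, the componentwise conclusion~\eqref{eq:IPmonomials} gives $\int_\Omega \bar{h}_i(x, v_\theta(u))\, dx \leq 0$ for each $i \in \{1, \dots, n_r\}$, hence $\int_\Omega \sum_{i=1}^{n_r} \bar{h}_i(x, v_\theta(u))\, dx \leq 0$. Subtracting this non-positive quantity from the previous display yields
\begin{equation*}
\int_\Omega \bar{f}(x, v_\theta(u))\, dx \;\ge\; -\int_\Omega \sum_{i=1}^{n_r} \bar{h}_i(x, v_\theta(u))\, dx \;\ge\; 0,
\end{equation*}
which is precisely~\eqref{ineq:integral}.

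There is no real obstacle here; the argument is a three-line bookkeeping exercise once Corollary~\ref{cor:IP} is in hand. The only point to verify carefully is that the cross terms between distinct entries of $v_\theta^{\{2\}}(u)$ in $v_\theta^T H(x) v_\theta$ are correctly symmetrized (as illustrated by the matrix $H(x)$ in Example~2) so that the pointwise scalar identity $v_\theta^T(F+H) v_\theta = \bar{f} + \sum_i \bar{h}_i$ holds exactly; this is built into the construction of $H(x)$ from $h$ and requires no additional argument.
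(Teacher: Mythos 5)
Your proposal is correct and follows essentially the same route as the paper's own proof: both start from the pointwise non-negativity of $v_\theta^{T}T(x)v_\theta$, integrate, split the result into the $F(x)$ and $H(x)$ contributions identified with $\bar{f}$ and $\sum_i \bar{h}_i$, and then invoke Corollary~\ref{cor:IP} to discard the non-positive term $\int_\Omega \sum_i \bar{h}_i\,dx$. The only difference is cosmetic ordering of the inequality chain, so there is nothing further to add.
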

\begin{proof}
Consider $h \in \mathcal{H}$ such that $T(x) \geq 0$ then 
\begin{multline}
0 \leq \int_\Omega  v_\theta^{T} T(x)  v_\theta dx 
\\ 
~~~~~~~\begin{array}{cl}
= &\int_\Omega   v_\theta^{T} \left[ F(x) + H(x) \right] v_\theta dx \\
= &\int_\Omega   v_\theta^{T} F(x)   v_\theta dx  + \int_\Omega   v_\theta^{T}  H(x)  v_\theta dx \\
= & \int_\Omega \bar{f}(x, v_{\theta}(u)) dx + \int_\Omega  \sum_i^{n_r}  \bar{h}_i(x, v_{\theta}(u)) dx \\
\leq &   \int_\Omega \bar{f}(x, v_{\theta}(u))dx
\end{array}
\end{multline}\end{proof}

\begin{remark}
Since the elements of $H(x)$ involve continuously differentiable functions and their derivatives,~\eqref{eq:Tx} is a differential matrix inequality. If we further assume that the functions $h$ and $f$ are polynomials on $x$ it is possible to formulate convex feasibility problem to solve~\eqref{eq:Tx}  as presented in the next section. 
\end{remark}

\section{Positivity in the domain}
\label{sec:domainpositivity}
 
The case of $T(x)$ in~\eqref{eq:Tx} being a polynomial on variable $x$ is addressed in this section. For this class of functions it is possible to formulate the positivity of the matrix in the prescribed domain as a convex optimization problem in the form of SDPs using Positivstellensatz. The following result  is a straightforward application of the Putinar's Positivstellensatz (see Theorem~\ref{thm:Psatz} in the appendix) to~\eqref{eq:Tx}, to hold in the set $\Omega = [0,1]$, characterized as the semi-algebraic set $\{x | x(1-x) \geq 0\}$.

\begin{corollary} \label{cor:psatz}
If there exists $N(x) \in \Sigma^{n_M \times n_M} [x]$ such that
\begin{equation} \label{eq:psatz}
T(x) - N(x) (x)(1-x) \in \Sigma^{n_M \times n_M} [x]
\end{equation}
then~\eqref{eq:Tx} holds.
\end{corollary}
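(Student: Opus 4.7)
The plan is to read Corollary~\ref{cor:psatz} as the easy (sufficiency) direction of Putinar's Positivstellensatz (Theorem~\ref{thm:Psatz}) specialized to matrix polynomials on the univariate semi-algebraic set $\Omega=\{x\in\real\mid g(x)\geq 0\}$ with $g(x):=x(1-x)$. First I would rewrite the hypothesis as a quadratic-module decomposition of $T$: setting $S(x):=T(x)-N(x)\,x(1-x)$, assumption~\eqref{eq:psatz} is precisely the statement $S\in\Sigma^{n_M\times n_M}[x]$, so one has
\begin{equation*}
T(x) \;=\; S(x) + N(x)\,g(x), \qquad S,\,N \in \Sigma^{n_M\times n_M}[x].
\end{equation*}

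Next I would invoke the defining property of an SOS polynomial matrix: any $M\in\Sigma^{n_M\times n_M}[x]$ admits a representation $M(x)=\sum_{i} V_i^{T}(x)V_i(x)$ for some polynomial matrices $V_i$, hence $M(x)$ is positive semi-definite at every real $x$. Applied to $S$ and $N$ separately, this yields $S(x)\geq 0$ and $N(x)\geq 0$ for every $x\in\real$, in particular for every $x\in\Omega$.

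Finally, I would restrict to $x\in\Omega=[0,1]$ and note that $g(x)=x(1-x)\geq 0$ there, so $N(x)g(x)$ is the product of a PSD matrix with a nonnegative scalar and is therefore itself PSD. Adding the two PSD matrices $S(x)$ and $N(x)g(x)$ gives $T(x)\geq 0$ for all $x\in\Omega$, which is exactly~\eqref{eq:Tx}. There is no real obstacle: the content of the corollary lies not in this forward direction but in the converse provided by the matrix Positivstellensatz, which guarantees the existence of such an $N$ whenever $T(x)>0$ on $\Omega$ and is what makes~\eqref{eq:psatz} a genuine semi-definite certificate usable in an SDP rather than a vacuous reformulation.
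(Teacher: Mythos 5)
Your proof is correct. The paper does not actually write out a proof of Corollary~\ref{cor:psatz}; it merely remarks that the result ``is a straightforward application of Putinar's Positivstellensatz'' to~\eqref{eq:Tx} on the semi-algebraic set $\{x \mid x(1-x)\geq 0\}$. What you supply is the elementary sufficiency argument that this remark glosses over: writing $T(x)=S(x)+N(x)\,x(1-x)$ with $S,N\in\Sigma^{n_M\times n_M}[x]$, using the Gram/factorized representation to get $S(x)\geq 0$ and $N(x)\geq 0$ pointwise, and observing that $x(1-x)\geq 0$ on $\Omega=[0,1]$ so the sum is positive semi-definite there. This direction needs none of the machinery of Theorem~\ref{thm:Psatz} (no Archimedean hypothesis, no compactness), and your closing remark correctly locates where the Positivstellensatz actually earns its keep: in the converse, i.e.\ guaranteeing that a certificate $N$ exists when $T(x)>0$ on $\Omega$, which is what keeps the SOS relaxation from being conservative in principle. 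So your route is slightly more self-contained and more precise about which implication is being proved than the paper's one-line citation, but it establishes exactly the stated implication and is consistent with what the authors intend.
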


\begin{remark}
If $T(x)$ is affine in the decision variables, which are the parameters $\bar{f}$ and $h$, the above test can be formulated as a SDP whose dimension depends on the degree of the polynomials in variables~$x$.
\end{remark}

\begin{remark}
Although the Positivstellensatz gives necessary and sufficient 
conditions for set containment, in order to make these conditions 
computationally tractable the degree of the sum-of-squares
polynomial $N(x)$ in~\eqref{eq:psatz} must be fixed.
\end{remark}

\section{Stability Analysis for Distributed Parameter Systems}
\label{sec:PDEanalysis}

Consider the following PDE system
\begin{equation} \label{eq:PDEsystem}
u_t = \mathcal{A} u, \quad u(x,0) = u_0(x) \in \mathcal{M} \subset H(\Omega)
\end{equation}
wherein, $H(\Omega)$ is an infinite-dimensional Hilbert space and $\mathcal{A}$ is a linear operator defined on $\mathcal{M}$, a closed subset of $H(\Omega)$.  It is assumed that $\mathcal{A}$ generates a linear semi-group of contractions, i.e., continuous solutions to the PDE exist in $\mathcal{M}$ and are unique. The interested reader can refer to~\cite{CZ95} for details.

In this section we study convergence in $\mathcal{L}_2$-norm of PDEs in one spatial variable and one temporal variable. 

Consider candidate Lyapunov functions of the form
\begin{equation} \label{eq:Lyap}
V(u) = \frac{1}{2} \int_\Omega u^{T}(x)P(x)u(x) dx,~ P(x) > 0~\forall x \in  \Omega
\end{equation}
That is $V(u) =  \frac{1}{2} \|u\|^2_{2,P}$, the squared $P(x)$-weighted $\Ltwo$-norm. Recall that convergence to zero solution and boundedness in a given norm imply convergence and boundedness for an equivalent norm but not for all norms in an infinite dimensional space. The following lemma states the equivalence of the weighted norm and the $\Ltwo$-norm. 
\begin{lemma}
\label{lem:normequivalence}
If $P(x)>0 \quad \forall x \in \bar{\Omega}$ then the norms $\|u\|_{2,P(x)}$ and $\|u\|_{2}$ are equivalent.
\end{lemma}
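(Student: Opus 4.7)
The plan is to exploit compactness of $\bar{\Omega}$ together with continuity of $P$ to bound the eigenvalues of $P(x)$ uniformly away from $0$ and from $\infty$, and then to integrate pointwise inequalities.

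First I would observe that, since $P(x)$ is a symmetric matrix-valued function depending continuously on $x$, the map $x \mapsto \lambda_{\min}(P(x))$ and $x \mapsto \lambda_{\max}(P(x))$ are continuous on $\bar{\Omega}$ (eigenvalues of a symmetric matrix depend continuously on its entries). Since $\bar{\Omega} = [0,1]$ is compact, these maps attain their extrema. Set
\begin{equation*}
\underline{\lambda} := \min_{x \in \bar{\Omega}} \lambda_{\min}(P(x)), \qquad \overline{\lambda} := \max_{x \in \bar{\Omega}} \lambda_{\max}(P(x)).
\end{equation*}
The hypothesis $P(x) > 0$ for every $x \in \bar{\Omega}$ guarantees $\underline{\lambda} > 0$, and continuity on a compact set guarantees $\overline{\lambda} < \infty$.

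Next, for each fixed $x$, the Rayleigh quotient inequality yields
\begin{equation*}
\underline{\lambda}\, u^{T}(x) u(x) \leq u^{T}(x) P(x) u(x) \leq \overline{\lambda}\, u^{T}(x) u(x) \qquad \forall x \in \Omega.
\end{equation*}
Integrating this pointwise bound over $\Omega$ and taking square roots gives
\begin{equation*}
\sqrt{\underline{\lambda}}\,\|u\|_{2,\Omega} \leq \|u\|_{(2,P),\Omega} \leq \sqrt{\overline{\lambda}}\,\|u\|_{2,\Omega},
\end{equation*}
which is exactly the equivalence of the two norms with constants $c_1 = \sqrt{\underline{\lambda}}$ and $c_2 = \sqrt{\overline{\lambda}}$.

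There is no real obstacle here; the only subtle point is making sure continuity of $P$ is in force so that $\underline{\lambda}$ is strictly positive rather than merely non-negative (pointwise positivity of $P$ on a non-compact or non-continuous setting would not suffice). Under the standing assumption that $P(x)$ is continuous on $\bar{\Omega}$ the argument is essentially two lines once the extremal eigenvalues are identified.
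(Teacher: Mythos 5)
Your proof is correct and follows essentially the same route as the paper: both bound the integrand pointwise by the extremal eigenvalues of $P(x)$ over $\bar{\Omega}$, integrate, and take square roots. The only difference is that you make explicit the compactness/continuity argument guaranteeing $\underline{\lambda}>0$ and $\overline{\lambda}<\infty$, which the paper leaves implicit.
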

\begin{proof}
Let $\lambda_M(P,\Omega) := \max_{\bar{\Omega}}(\lambda(P(x)))$, $\lambda_m(P,\Omega) = \min_{\bar{\Omega}}(\lambda(P(x)))$. One has
\begin{multline}
\|u\|_{2,P(x)}^2  = \left[ \int_\Omega u^{T}(x)P(x)u(x) dx \right] \\ \leq \lambda_M(P,\Omega) \left[  \int_\Omega u^{T}(x)u(x) dx \right] = \lambda_M \|u\|_{2}^2 
\end{multline}
\begin{multline}
\|u\|_{2,P(x)}^2  = \left[ \int_\Omega u^{T}(x)P(x)u(x) dx \right] \\ \geq \lambda_m(P,\Omega) \left[   \int_\Omega u^{T}(x)u(x) dx \right] = \lambda_m \|u\|_{2}^2.
\end{multline}
 Therefore
\begin{equation}
\sqrt{\lambda_m(P,\Omega)}  \|u\|_{2}  \leq \|u\|_{2,P(x)} \leq \sqrt{\lambda_M(P,\Omega)}  \|u\|_{2}.\end{equation}\end{proof}

The following proposition is a Lyapunov result for the exponential convergence of the $\Ltwo$ norm of the solutions to~\eqref{eq:PDEsystem} :

\begin{theorem} \label{th1}
Suppose there exists a function $V$ is a functional $V(0) = 0$, and scalars $c_1$, $c_2$, $c_3\in\real_{>0}$ such that
\begin{equation} 
c_1 \|u\|_{2,\Omega} \leq V(u) \leq c_2 \|u\|_{2,\Omega} \label{eq:boundV}
\end{equation} 
\begin{equation} 
V_t(u) \leq  - c_3 \|u\|_{2,\Omega} \label{eq:boundVdot}
\end{equation} 
then the $\Ltwo$ norm of the trajectories of~\eqref{eq:PDEsystem} satisfy
\begin{equation}
\label{eq:expbound}
\|u(t)\|_{2,\Omega} \leq \dfrac{c_2}{c_1} \|u(t_0)\|_{2,\Omega} e^{-\frac{c_3}{c_1}(t-t_0)}
\end{equation}
where $u(t_0) = u(t_0,x)$.
\end{theorem}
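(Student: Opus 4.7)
The plan is a standard Lyapunov integration that reduces the two functional bounds in~\eqref{eq:boundV}--\eqref{eq:boundVdot} to a scalar differential inequality for $V(u(t))$, which is then integrated by Gronwall's lemma. Along any trajectory of~\eqref{eq:PDEsystem}, the right half of~\eqref{eq:boundV} gives $\|u\|_{2,\Omega}\ge V(u)/c_2$, so that
\[
V_t(u(t)) \;\le\; -c_3\,\|u(t)\|_{2,\Omega} \;\le\; -\frac{c_3}{c_2}\,V(u(t)).
\]
(The natural decay rate produced this way is $c_3/c_2$; the exponent $c_3/c_1$ appearing in~\eqref{eq:expbound} looks like a typographical slip, and the argument below yields whichever of the two the authors intended.)

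Next I would apply the scalar comparison principle to the above inequality to obtain $V(u(t))\le V(u(t_0))\,e^{-\tfrac{c_3}{c_2}(t-t_0)}$. Combining this with the left half of~\eqref{eq:boundV}, namely $c_1\|u(t)\|_{2,\Omega}\le V(u(t))$, and with the right half evaluated at $t_0$, $V(u(t_0))\le c_2\|u(t_0)\|_{2,\Omega}$, one rearranges to
\[
\|u(t)\|_{2,\Omega} \;\le\; \frac{c_2}{c_1}\,\|u(t_0)\|_{2,\Omega}\,e^{-\tfrac{c_3}{c_2}(t-t_0)},
\]
which is exactly~\eqref{eq:expbound} (with the aforementioned caveat on the exponent). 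The prefactor $c_2/c_1$ matches directly, so no tightening or rescaling is needed.

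The only non-routine point, and the expected main obstacle, is justifying that $t\mapsto V(u(t))$ is differentiable almost everywhere with derivative equal to the formal expression $V_t(u)$ that the hypothesis bounds. Since $\mathcal{A}$ is assumed to generate a strongly continuous semigroup of contractions on $H(\Omega)$ and $u_0\in\mathcal{M}$, classical solutions exist in $\mathcal{M}$ and, because $V$ is a continuous quadratic functional of $u$ (with smooth weight $P(x)$ in the running example~\eqref{eq:Lyap}), the chain rule applies along classical solutions; the estimate then extends to mild solutions by density and continuity of the $\mathcal{L}^2$-norm, along the standard lines of~\cite{CZ95}. Modulo this semigroup-theoretic bookkeeping, which the hypotheses on $\mathcal{A}$ already make available, the theorem reduces to the one-line Gronwall computation above.
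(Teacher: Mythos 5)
Your proof is correct and follows essentially the same route as the paper's: reduce the hypotheses to a scalar differential inequality for $V(u(t))$ and integrate (the paper integrates $(\ln V(u))_t$ rather than invoking Gronwall, a cosmetic difference). You are also right to flag the exponent: the paper's own first step asserts $V_t(u)/V(u) \le -c_3/c_1$, which would require $V(u) \le c_1\|u\|_{2,\Omega}$ --- the reverse of hypothesis~\eqref{eq:boundV} --- so only the rate $c_3/c_2$ actually follows from the stated assumptions, exactly as you derive.
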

\begin{proof}
From \eqref{eq:boundV}-\eqref{eq:boundVdot} one obtains
\begin{equation*}
\dfrac{V_t(u)}{V(u)} \leq -\dfrac{c_3}{c_1}
\end{equation*}
since $\frac{V_t(u)}{V(u)} = \left(\ln(V(u))\right)_t$, the integral of the above expression over $[t_0, t]$, gives
\begin{equation*}
\begin{array}{c}
\int_{[t_0,t]} \left(\ln(V(u(\tau)))\right)_\tau d\tau \leq -\dfrac{c_3}{c_1}(t - t_0)\\
\ln(V(u(t))) -\ln(V(u(t_0))) \leq -\dfrac{c_3}{c_1}(t - t_0) \\
\dfrac{V(u(t))}{V(u(t_0))} \leq e^{-\dfrac{c_3}{c_1}(t - t_0)} \\
V(u(t)) \leq V(u(t_0))e^{-\dfrac{c_3}{c_1}(t - t_0)} 
\end{array}
\end{equation*}
finally \eqref{eq:expbound} is obtained by applying the bounds of~\eqref{eq:boundV} on the above inequality.
\end{proof}

\begin{corollary}
If there exists a function $P(x)$ and positive scalars $\epsilon_1$, $\epsilon_2$ such that
\begin{equation} \label{eq:lyappos}
\frac{1}{2} \int_\Omega u^{T}(x)P(x)u(x) - \epsilon_1 u^{T}(x)u(x)  dx  \geq 0 
\end{equation}
\begin{equation} \label{eq:lyapder}
-  \int_\Omega  u^{T}(x)P(x)\mathcal{A}u(x) + \epsilon_2 u^{T}(x)u(x)  dx \geq 0 
\end{equation}
Then the $\Ltwo$ norm of solutions to~\eqref{eq:PDEsystem} satisfy~\eqref{eq:expbound}.
\end{corollary}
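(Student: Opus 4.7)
The plan is to verify that the two integral inequalities in the corollary imply the three hypotheses of Theorem~\ref{th1}, applied to the weighted $\mathcal{L}_2$ functional $V(u) = \frac{1}{2}\int_\Omega u^T(x)P(x)u(x)\,dx$ from~\eqref{eq:Lyap}. Since $V$ is quadratic in $u$, $V(0)=0$ holds trivially, so the remaining work is to produce the three constants $c_1, c_2, c_3 > 0$ matching~\eqref{eq:boundV}--\eqref{eq:boundVdot}.

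First I would extract the lower bound on $V$. Rewriting inequality~\eqref{eq:lyappos} gives directly
\begin{equation*}
V(u) \;\geq\; \epsilon_1 \int_\Omega u^T(x)u(x)\,dx \;=\; \epsilon_1 \|u\|_{2,\Omega}^2,
\end{equation*}
which identifies $c_1 = \epsilon_1$. A useful byproduct of this step is that, since the inequality must hold for every admissible $u$, the weight satisfies $P(x) \succeq 2\epsilon_1 I$ on $\Omega$ (at least almost everywhere, with equality by continuity of $P$). In particular $P(x)>0$ for all $x\in\overline{\Omega}$, so Lemma~\ref{lem:normequivalence} applies and yields
\begin{equation*}
V(u) \;=\; \tfrac{1}{2}\|u\|_{2,P}^2 \;\leq\; \tfrac{1}{2}\lambda_M(P,\Omega)\,\|u\|_{2,\Omega}^2,
\end{equation*}
providing the upper bound with $c_2 = \tfrac{1}{2}\lambda_M(P,\Omega)$.

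Next I would compute the time derivative of $V$ along trajectories of~\eqref{eq:PDEsystem}. Since $P$ depends only on $x$ and $u_t = \mathcal{A}u$,
\begin{equation*}
V_t(u) \;=\; \int_\Omega u^T(x)P(x)u_t(x)\,dx \;=\; \int_\Omega u^T(x)P(x)\mathcal{A}u(x)\,dx.
\end{equation*}
Inequality~\eqref{eq:lyapder} is exactly $-V_t(u) \geq \epsilon_2 \|u\|_{2,\Omega}^2$, so $V_t(u)\leq -\epsilon_2\|u\|_{2,\Omega}^2$ and we may take $c_3 = \epsilon_2$. All three conditions of Theorem~\ref{th1} are then in force (reading the bounds~\eqref{eq:boundV}--\eqref{eq:boundVdot} with the squared norms that the proof of that theorem actually uses), and~\eqref{eq:expbound} follows.

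The only substantive step is the upper bound on $V$, where one must ensure $P$ is uniformly positive definite on $\overline{\Omega}$ so that $\lambda_M(P,\Omega)$ is finite and Lemma~\ref{lem:normequivalence} applies; this is not assumed a priori in the corollary but is a consequence of~\eqref{eq:lyappos} together with the continuity of $P$. Everything else is a direct substitution of the two integral hypotheses into the conclusions of Theorem~\ref{th1}.
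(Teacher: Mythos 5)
Your proof is correct and follows exactly the route the paper intends: the corollary is stated without proof as an immediate consequence of Theorem~\ref{th1} applied to the functional~\eqref{eq:Lyap}, with $c_1$ supplied by~\eqref{eq:lyappos}, $c_2=\tfrac{1}{2}\lambda_M(P,\Omega)$ supplied by Lemma~\ref{lem:normequivalence}, and $c_3=\epsilon_2$ supplied by~\eqref{eq:lyapder} after computing $V_t=\int_\Omega u^TP\mathcal{A}u\,dx$. Your two side remarks --- that positive definiteness of $P$ must be inferred from~\eqref{eq:lyappos} rather than assumed, and that the bounds~\eqref{eq:boundV}--\eqref{eq:boundVdot} must be read with squared norms for consistency with the quadratic $V$ --- correctly patch minor imprecisions in the paper's own statements rather than gaps in your argument.
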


\begin{remark}
Integration-by-parts is a key step to prove stability for PDE systems~\cite{Str04},~\cite{KS08}. It allows to incorporate the boundary conditions when developing the LF time-derivative along the trajectories of the system.\footnote{One example of the application of integration by parts to develop the LF time-derivative is given in the Appendix \ref{app:transport}.} Since the relation among the dependent variables and the boundary conditions are embedded in the polynomials $h(x)$ in~\eqref{eq:IPmonomials}, it is possible to directly treat the derivative condition by studying the integral inequality~\eqref{eq:lyapder}.
\end{remark}

\section{Examples}
\label{sec:examples}
In this section we obtain solutions to the integral inequalities corresponding to Lyapunov stability conditions derived in the previous section. Notice that we can consider $\Omega = [0,1]$ since different one-dimensional domains can be mapped into the unit interval  by means of an appropriate change of variables.

\subsection{Poincar\'e inequality}
\label{ex:poincare}
The Poincar\'e inequality~\cite[p.163]{McO96}
\begin{equation}
\int_{\Omega} u^{2}dx \leq \kappa(\Omega)\int_{\Omega} u_x^{2} dx 
\label{eq:poincare}
\end{equation}
where $\Omega$ is a bounded domain and $\kappa$ is a constant depending on the domain, holds for all $u \in H_0^{1,2}(\Omega)$ and establishes bounds for $\|u\|_2^2$ in terms of $\|u_x\|_2^2$. By rewriting the above inequality as 
\begin{equation}
  \int_{\Omega} \kappa u_x^{2}  - u^{2}  dx \geq 0 
 \label{eq:poincare2}
\end{equation}
one obtains an integral constraint of the form~\eqref{ineq:integral}. Notice that the integrand is affine on~$\kappa$. One may wish to obtain a tight bound for~\eqref{eq:poincare}, \emph{i.e.} find a solution to the following problem 
\begin{equation}
\begin{array}{l}\mbox{minimize } \kappa\\
\mbox{subject to }   \int_{\Omega} \kappa u_x^{2}  - u^{2}  dx \geq 0
\end{array}
\label{optprob:poincare}
\end{equation}

The steps described in Section~\ref{sec:positivefunctionals} are followed by first noticing that the integrand of the integral in \label{eq:poincare2} involves only $u$ and its spatial derivative $u_x$, therefore let $\theta = 1$ in \eqref{eq:IPmonomials} and $v_{\theta-1}(u)=u^2$. Following Proposition~\ref{prop:integrandSDP}  the problem~\eqref{optprob:poincare} becomes
\begin{multline}
\begin{array}{l}\mbox{minimize } \kappa\\
\mbox{subject to } He\left(\dfrac{1}{2} \left[ \begin{array}{cc} -1 + h_x(x) & h(x) \\ 0 &  \kappa  \end{array} \right]\right) \geq 0  
\end{array}\\  \forall x \in \Omega.
\label{optprob:poincare2}
\end{multline}
Assuming $h(x)$ to be of polynomial form, $\Omega = \left[0,1\right]$ and applying  Positivstellensatz as described in Section~\ref{sec:domainpositivity},~\eqref{optprob:poincare2} becomes the following SOSP
\begin{equation}
\begin{array}{l}\mbox{minimize } \kappa\\
\begin{array}{cl} \mbox{subject to } & He\left(\dfrac{1}{2} \left[ \begin{array}{cc} -1 + h_x(x) & h(x) \\ 0 &  \kappa  \end{array} \right]\right) \\& + N(x)x(x-1) \in \Sigma^{2 \times 2}[x], \\ & \quad N(x)  \in \Sigma^{2 \times 2}[x].\end{array}
\end{array}
\label{optprob:poincareSOS}
\end{equation}
The problem~\eqref{optprob:poincareSOS} is formulated and solved using SOSTOOLS considering different degrees for polynomial $h(x)$ and $N(x)$. Figure \ref{fig:poincare} depicts the optimal value $\kappa^{*}$ as a function of the degree of $h(x)$ (the curve was computed setting $deg(N(x)) = deg(h(x))+2$). The figure also presents the optimal bound $\pi^{-2}$ for the studied domain~\cite{PW60}.
\begin{figure}[!htb]
\begin{psfrags}
     \psfrag{degg}[l][l]{\footnotesize $deg(h)$}
     \psfrag{c1}[l][l][1][-90]{\footnotesize $\kappa^{*}$}
\epsfxsize=7cm
\centerline{\epsffile{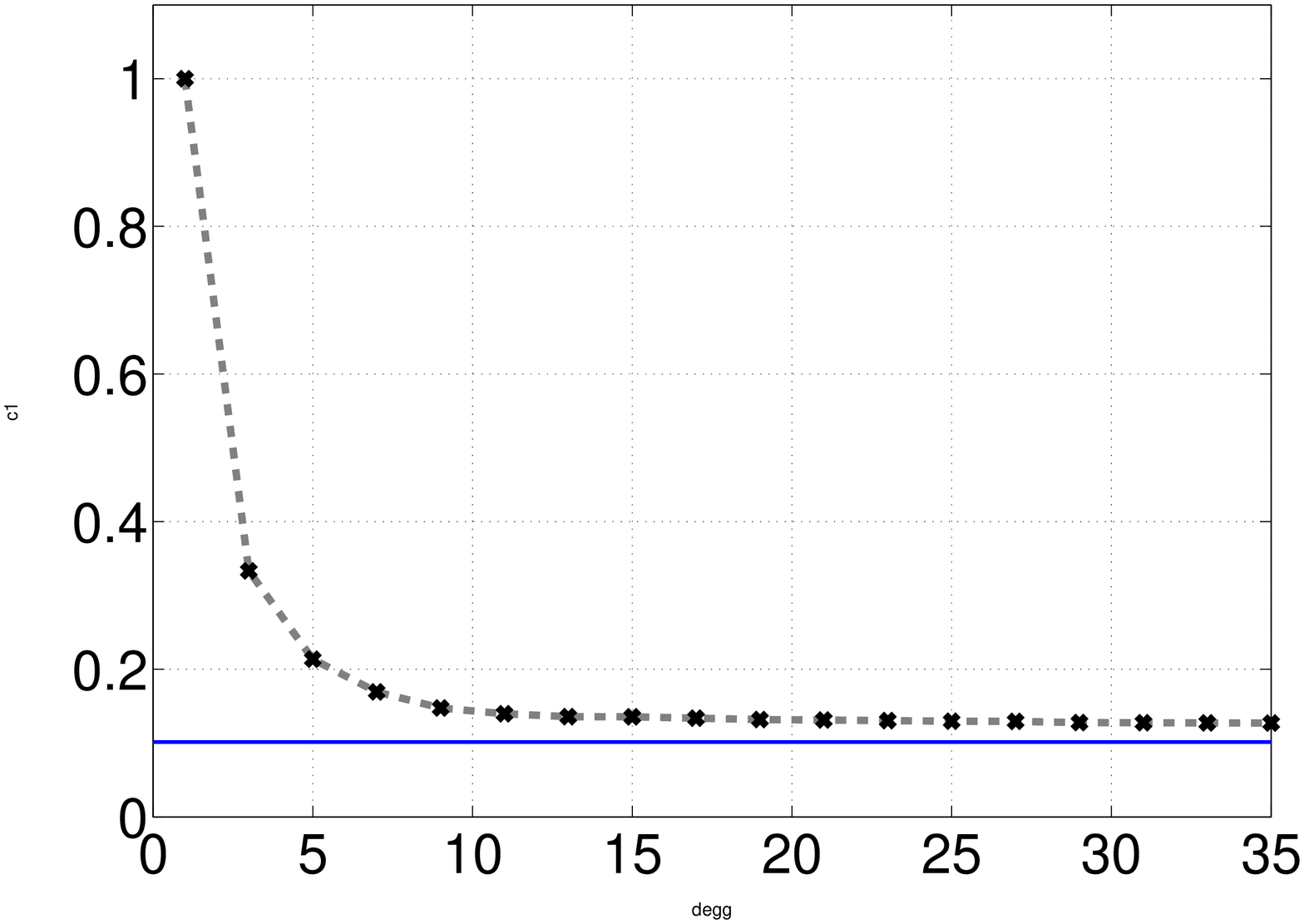}}
\end{psfrags}
\caption{Optimal values for problem~\eqref{optprob:poincareSOS} as a function of the degree of $h(x)$.
\label{fig:poincare}}
\end{figure}

\subsection{The transport equation}
Consider the following PDE
\begin{eqnarray} 
\label{eq:transport}
&u_t = -u_{x} \quad x \in [0,1], \,\, t>0& \nonumber u(0) = 0.
\end{eqnarray}

Let $E_p = \dfrac{1}{2} \int_{\Omega}e^{-\lambda x} u^{2}(x)dx$ be the candidate function to certify $-\lambda E_p - E_{pt} \geq 0$ that is, to certify exponential stability with exponential rate~$\lambda>0$. One has
\begin{equation}
\label{eq:transpg}
-\lambda E_p - E_{pt}  = \int_{\Omega} - \frac{\lambda}{2} e^{-\lambda x} u^2 + e^{-\lambda x}uu_x dx \geq 0,
\end{equation}
which is an inequality as~\eqref{ineq:integral}. Consider $\eta_2(v_1(u)) =  u^2$ and $h(x) = -\frac{1}{2} e^{-\lambda x}$. Since $h(1) = - \frac{1}{2}  e^{-\lambda}<0$, one has $h(1)u^{2}(1) - h(0)u^{2}(0) = h(1)u^{2}(1)<0$, hence $h(x) \in \mathcal{H}(2,1)$ and
\begin{multline*}
h(x)\eta_2(v_1(u))|_{\partial \Omega} =  h(1)u^2(1)    \\ =  \int_{\Omega}  \left( h_xu^2 +h uu_x\right) dx \\= \int_{\Omega}  \frac{1}{2} \lambda e^{-\lambda x}u^2    - e^{-\lambda x} uu_x dx \leq 0
\end{multline*}
where equality holds only if $u(1) = 0$. Adding up $-\lambda E_p - E_{pt}$ and $h(1)u^2(1)$ one obtains
\begin{multline*}
-\lambda E_p - E_{pt} + h(1)u^2(1)  \\ = \int_{\Omega} - \frac{\lambda}{2} e^{-\lambda x} u^2   + e^{-\lambda x}uu_x dx \\+ \int_{\Omega}  \frac{\lambda}{2} e^{-\lambda x} u^2  - e^{-\lambda x} uu_x dx  = 0 
\end{multline*}
therefore
\begin{equation*}
\label{eq:finaltransport}
-\lambda E_p - E_{pt} =  -h(1)u^2(1)  \geq 0, 
\end{equation*}
proving the exponential stability of the zero solution for \emph{any} convergence rate~$\lambda>~0$. This result should be expected as, for a compact and bounded domain, the transport equation is \emph{finite-time} stable. In Appendix~\ref{app:transport} the time-derivative of $E_p$ along the trajectories of~\eqref{eq:transport} is developed with steps using integration by parts to also prove the exponential stability of the zero solution.

By considering inequalities~\eqref{eq:lyappos}-\eqref{eq:lyapder} with a polynomial weighting function and considering polynomial~$h(x)\in \mathcal{H}(2,1)$,  the Positivstellensatz is applied to formulate the following feasibility SOSP
\begin{multline}
\begin{array}{l}
\mbox{find } p(x),~h(x), ~N(x)\\
\mbox{subject to}
\end{array}\\
\begin{array}{l}   He\left(\dfrac{1}{2} \left[ \begin{array}{cc} -\lambda p(x) + h_x(x) & -p(x)+h(x) \\ 0 & 0  \end{array}\right]\right) \\ + N(x)x(x-1) \in \Sigma^{2 \times 2}[x],    \quad N(x)  \in \Sigma^{2 \times 2}[x]. \end{array}
\label{optprob:transport}
\end{multline}

With a polynomial $p(x)$ degree $30$ stability of the zero solution to~\eqref{eq:transport} was certified for $\lambda\in (0, 10]$. The results are depicted in Figure~\ref{fig:transport}.
\begin{figure}[!htb]
\begin{psfrags}
     \psfrag{x1}[l][l]{\footnotesize $x$}
     \psfrag{px}[l][l][1][-90]{\footnotesize $p(x)$}
\epsfxsize=7cm
\centerline{\epsffile{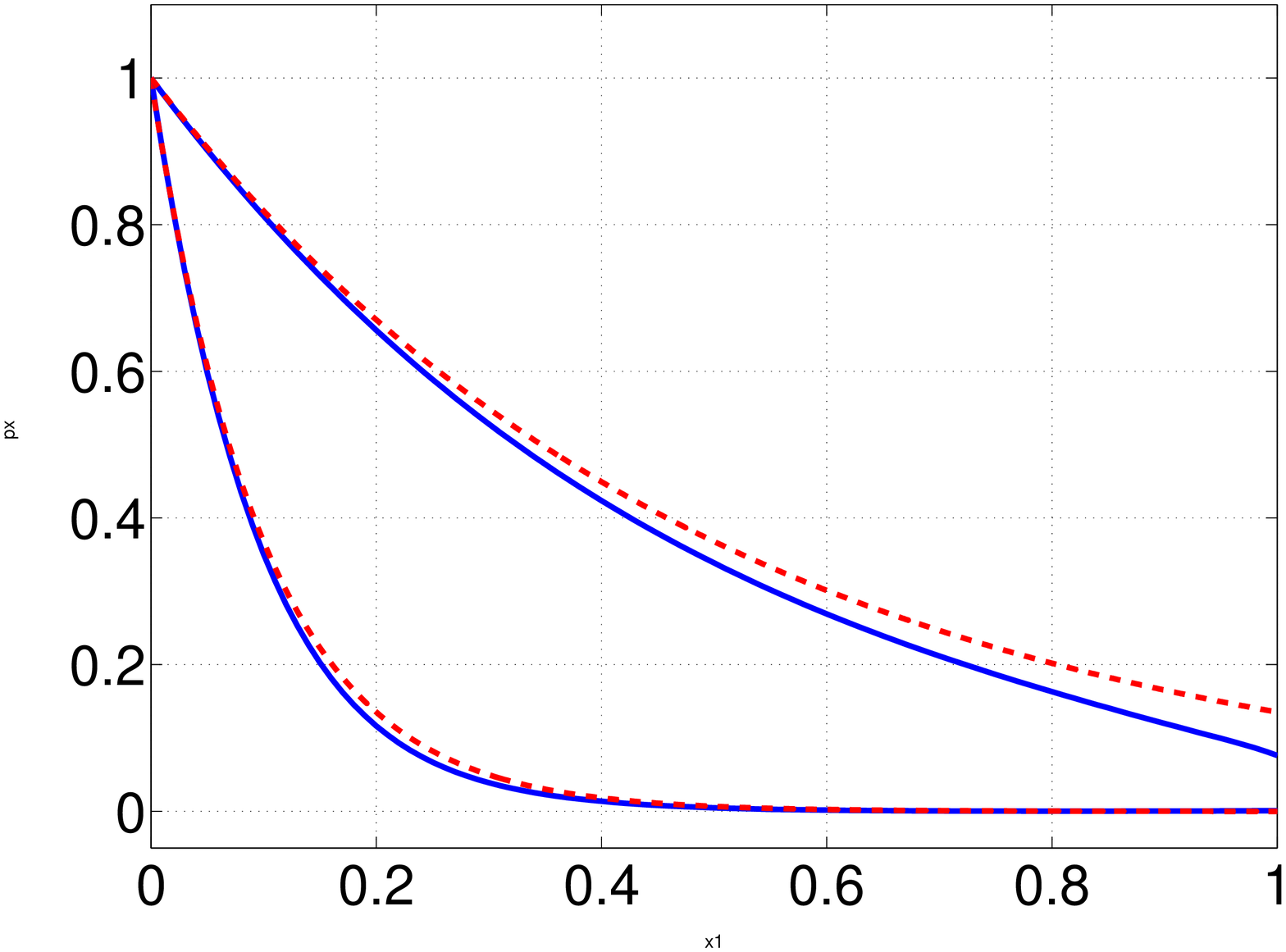}}
\end{psfrags}
\caption{Weighting functions proving exponential stability for convergence rates $\lambda \in \{2,10\}$. The red dotted curves depict the analytical result $\frac{1}{2}e^{-\lambda x}$ while the solid blue lines are correspond to the polynomials obtained by solving~\eqref{optprob:transport}.
\label{fig:transport}}
\end{figure}

\subsection{Heat Equation with Reaction Term}
Consider the following anisotropic PDE
\begin{equation} \label{eq:heatanisotropic}
u_t = u_{xx} + \lambda(x) u,~x \in [0,1],~ u(0)= u(1) = 0 
\end{equation}
where, $\lambda : [0,1] \to \mathbb{R}$. When $\lambda(x) = \lambda_c$, the Lyapunov functional $\int_{0}^1 u^2 \,\, dx$, proves asymptotic stability for $\lambda_c \in (-\infty,\pi^2)$ (see Appendix \ref{app2}). In order to study the exponential stability of~\eqref{eq:heatanisotropic}  consider a weighted $\Ltwo$ function as~\eqref{eq:Lyap}. 

In~\cite{PP06} the system was studied with $\lambda(x) = \lambda_c$ and employing an  \emph{ad hoc} integration by parts construction to obtain a tight estimate for the stability interval. Here $\lambda(x)$ is considered as $\lambda(x) = \lambda_c - 24x + 24x^2$ and a line search was performed maximize the coefficient $\lambda_c$ for which the system is stable. We obtained the value $ \lambda_c^{*} = 14.1$ by solving~\eqref{eq:lyappos}-\eqref{eq:lyapder} with a polynomial weighting function.  Figure~\ref{fig:heatcoeff}  depicts $\lambda(x)$ with the obtained value.  The stability bound for a constant coefficient $\lambda$, $\pi^2$, is also depicted. Notice that for some $\lambda(x)> \pi^2$ for some values of $x$. The obtained weighting function $p(x)$, a polynomial of degree $10$ is illustrated in Figure~\ref{fig:heatcoeffLF}

\begin{figure}[!htb]
\begin{psfrags}
     \psfrag{x}[l][l]{\footnotesize $x$}
     \psfrag{lamx}[c][c][1][-90]{\footnotesize $\lambda(x)$}
\epsfxsize=6cm
\centerline{\epsffile{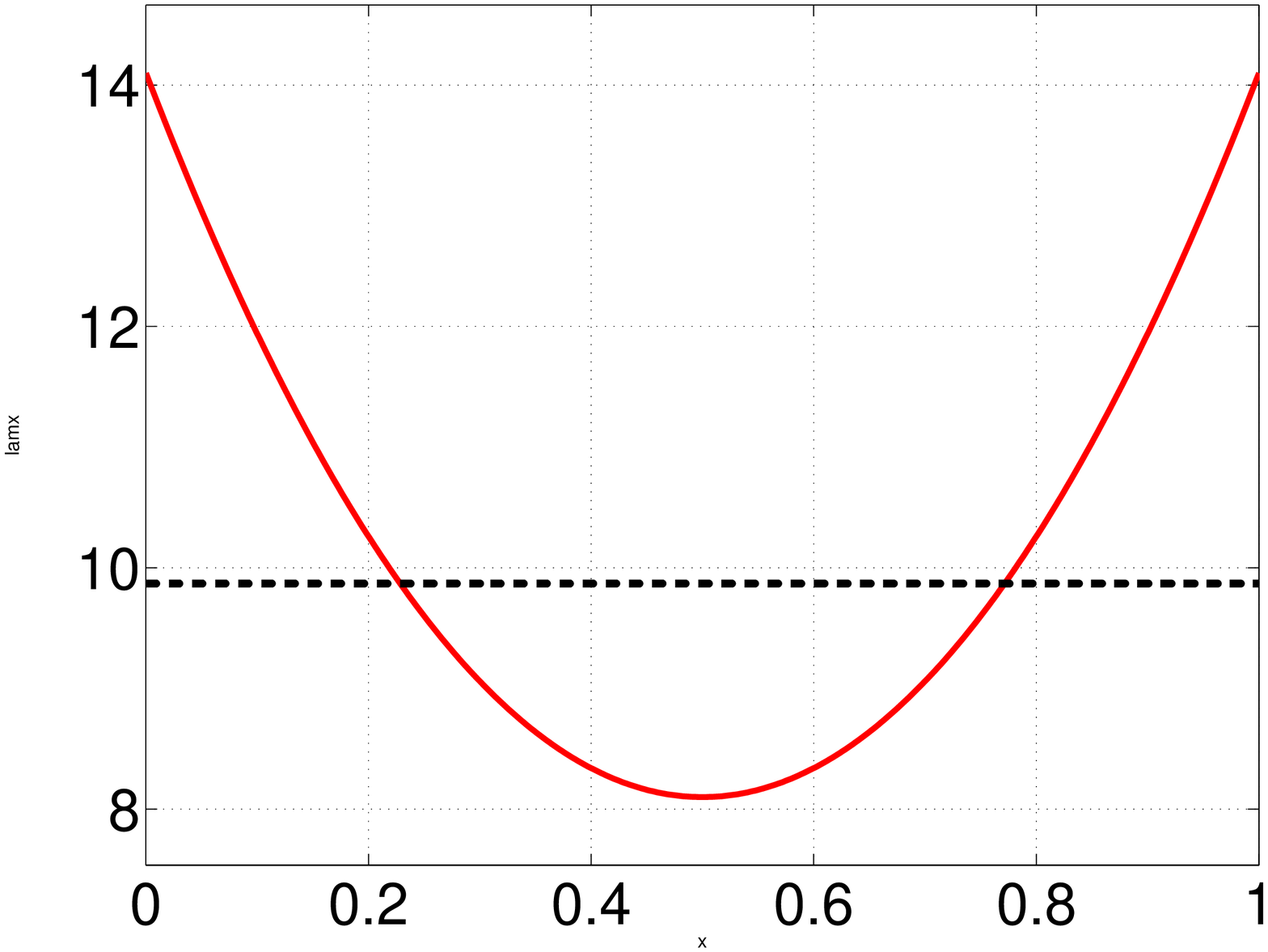}}
\end{psfrags}
\caption{The spatially varying coefficients $\lambda = \pi^2$ (dashed black) $\lambda(x) = \lambda_c - 24x + 24x^2$ (solid red).
\label{fig:heatcoeff}}
\end{figure} 

\begin{figure}[!htb]
\begin{psfrags}
     \psfrag{x}[l][l]{\footnotesize $x$}
     \psfrag{px}[c][c][1][-90]{\footnotesize $p(x)$}
\epsfxsize=6cm
\centerline{\epsffile{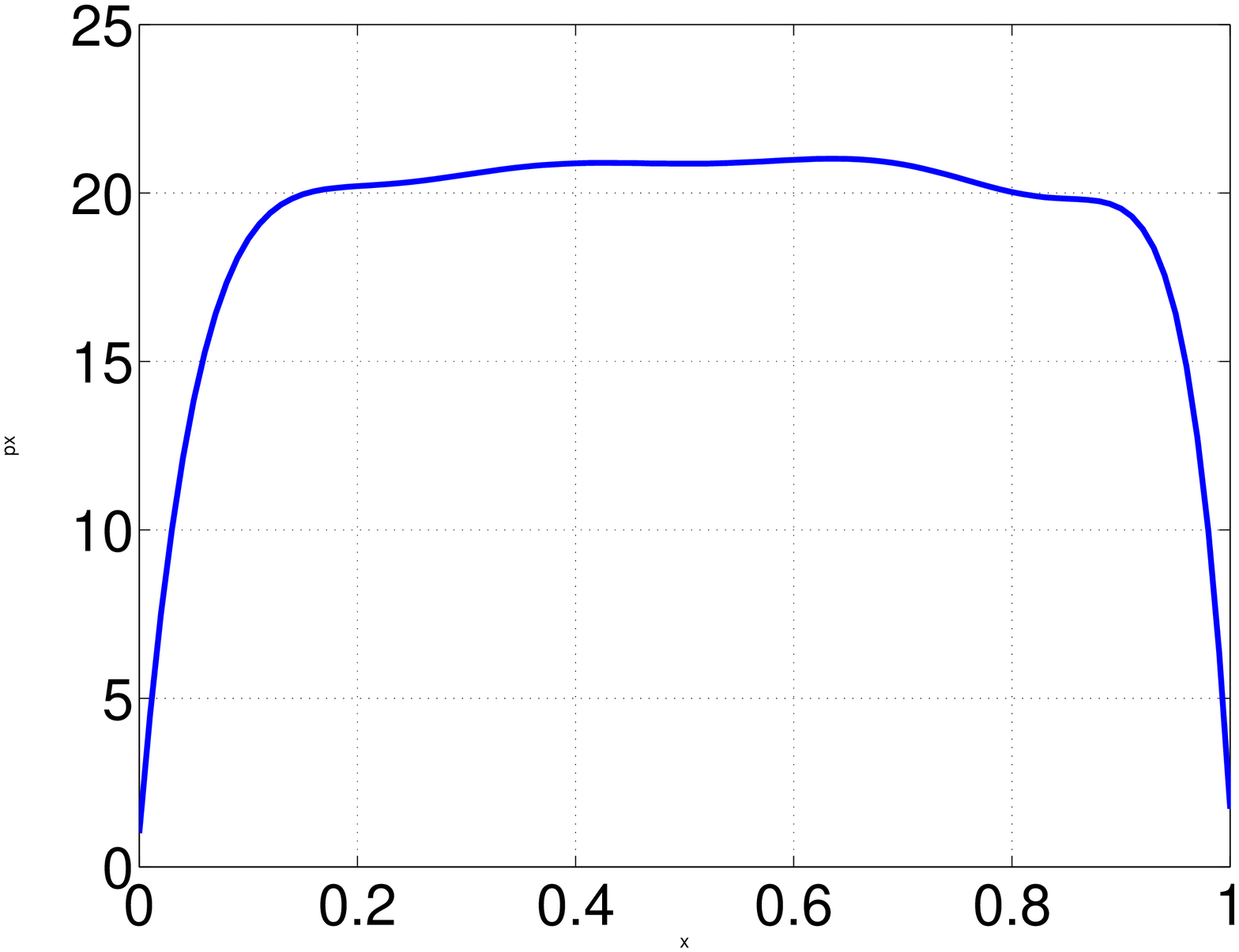}}
\end{psfrags}
\caption{The weighting function $p(x)$ for system~\eqref{eq:heatanisotropic}.
\label{fig:heatcoeffLF}}
\end{figure}

%
%
%

\subsection{System of PDEs coupled via reaction term} \label{ss1}
Consider the following system of PDEs inspired by~\cite[p 38]{Str04}
\begin{align}  \label{eq:coupled}
\left\lbrace\begin{array}{rcl}
u_t &=& \frac{1}{R} u_{xx} + \alpha u +\gamma v \\
 v_t &=& \frac{1}{R} v_{xx}   +\delta u + \beta v
\end{array}\right.,\\x \in [0,1], ~u(0) = u(1) =v(0) = v(1) = 0 \nonumber
\end{align}
where, $\alpha = 1$, $\gamma = 1.5$, $\delta =5$ and $\beta = 0.2$. Through simulation it is observed that for $R < 2.7$ trajectories converge to the zero solution.   

We consider the energy and functionals~\eqref{eq:Lyap} of different degrees.  The results are depicted in Table~\ref{table:coupled}. Figure~\ref{fig:coupled} details the solution for $deg(P(x)) = 4$, $P = \left[\begin{smallmatrix} P_{11}&P_{12}\\ P_{12}&P_{22} \end{smallmatrix}\right]$ showing the values of the entries of the weighting matrices and its eigenvalues. 

\begin{table}[!h]
\caption{Stability intervals for parameter $R\in (0,R^{*}]$ for different degrees of $P(x)$.\label{table:coupled}}
\centering
\begin{tabular}{|c||c|c|c|c|c|c|}
\hline 
\bfseries $deg(P(x))$ & 0 ($P(x) = I$) & 0 & 2& 4 & 6& 8\\
\hline
\bfseries $R^{*}$  & 0.2 &  0.3 &  1.7 & 2.3 & 2.4 & 2.45 \\
\hline
\end{tabular}
\end{table}

\begin{figure}[ht]
\begin{psfrags}
     \psfrag{x}[l][l]{\footnotesize $x$}
     \psfrag{Px}[c][c][1][-90]{\footnotesize $\lambda(P(x))~~$}
\epsfxsize=6cm
\centerline{\epsffile{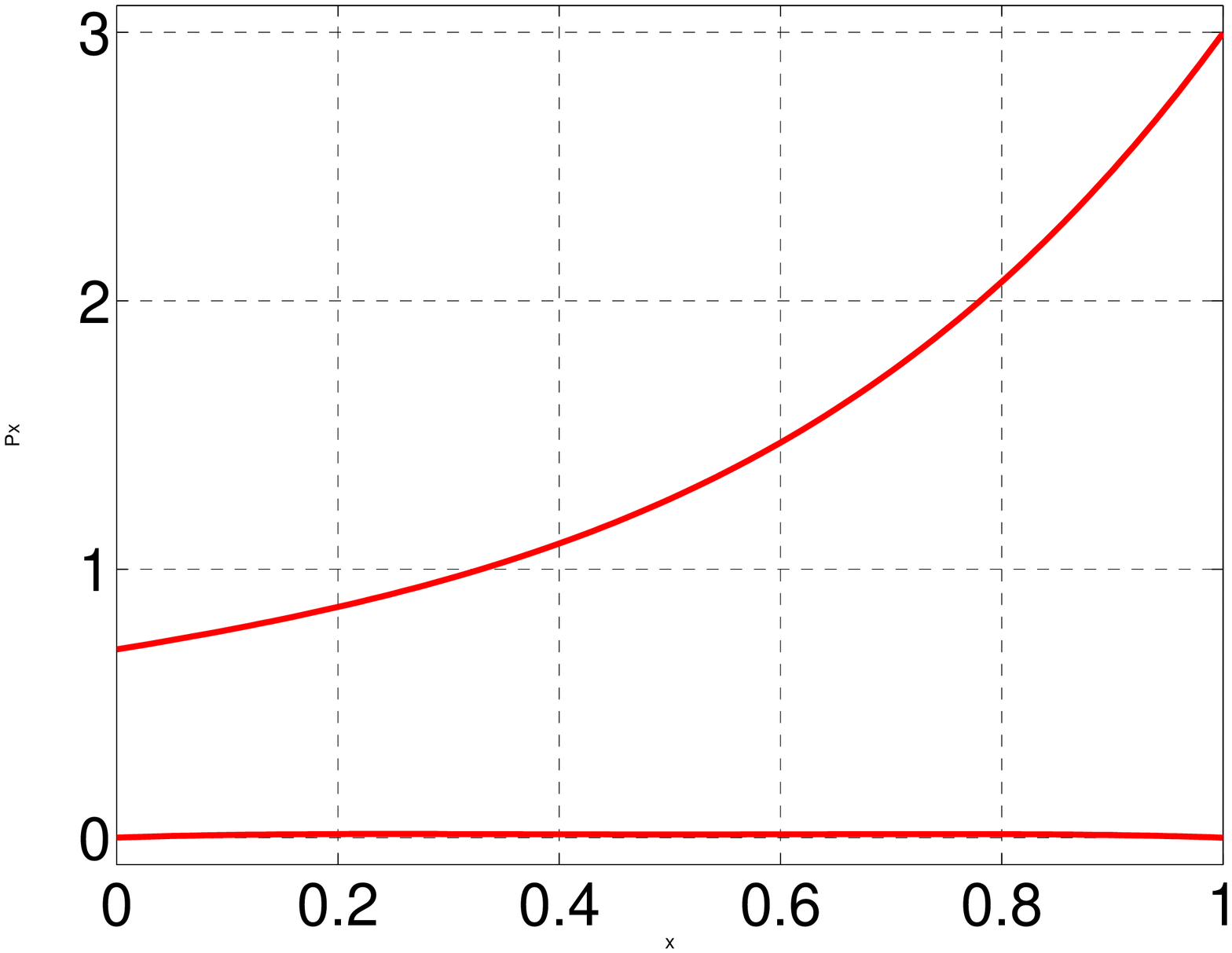}}
\end{psfrags}
\caption{Eigenvalues of $P(x) = $ of degree~$4$. Notice that both eigenvalues are positive.
\label{fig:coupled}}
\end{figure}
 

%

\section{Conclusion}\label{se:conclusion}

This paper has formulated conditions for the positivity of functional inequalities in terms of positivity of their integrands by characterizing a set of expressions constructed from the Fundamental Theorem of Calculus. The main assumption is that the functionals under study are polynomial on the dependent variables.  The case of polynomial dependence of the integrand on the independent variable allows for the formulation of a convex optimization problem given by SDPs. 

These formulations were then used to study integral inequalities arising from Lyapunov stability conditions for PDEs. Several examples illustrate the effectiveness of the proposed approach. The examples are instances of the set of PDEs which are polynomial on the dependent variable and its derivatives. 

Polynomial parametrization of the weighting functions on the Lyapunov functionals is not restrictive since, according to Weierstrass approximation theorem, any continuous function on a bounded interval can be approximated by a polynomial.   The drawback is that the degree of the approximating polynomial may not be known \emph{a priori}.

The research leading to the results  presented here was motivated from the fact that integration by parts is a crucial step on the stability analysis. The local checks, which are often provided by embedding theorems on bounded domains, are also important. Our scope was to make these steps computationally tractable by formulating SDPs. However, we believe the results presented in sections~\ref{sec:positivefunctionals} and~\ref{sec:domainpositivity} go beyond the scope of stability analysis of PDEs, providing an efficient method of formulating a set of optimization problems with integral constraints in a convex optimization framework.

\bibliography{references}

\begin{thebibliography}{10}
\providecommand{\url}[1]{#1}
\csname url@samestyle\endcsname
\providecommand{\newblock}{\relax}
\providecommand{\bibinfo}[2]{#2}
\providecommand{\BIBentrySTDinterwordspacing}{\spaceskip=0pt\relax}
\providecommand{\BIBentryALTinterwordstretchfactor}{4}
\providecommand{\BIBentryALTinterwordspacing}{\spaceskip=\fontdimen2\font plus
\BIBentryALTinterwordstretchfactor\fontdimen3\font minus
  \fontdimen4\font\relax}
\providecommand{\BIBforeignlanguage}[2]{{%
\expandafter\ifx\csname l@#1\endcsname\relax
\typeout{** WARNING: IEEEtran.bst: No hyphenation pattern has been}%
\typeout{** loaded for the language `#1'. Using the pattern for}%
\typeout{** the default language instead.}%
\else
\language=\csname l@#1\endcsname
\fi
#2}}
\providecommand{\BIBdecl}{\relax}
\BIBdecl

\bibitem{BPWB13}
F.~Bribiesca~Argomedo, C.~Prieur, E.~Witrant, and S.~Bremond, ``A strict
  control {L}yapunov function for a diffusion equation with time-varying
  distributed coefficients,'' \emph{Automatic Control, IEEE Transactions on},
  vol.~58, no.~2, pp. 290--303, 2013.

\bibitem{CWPD13}
F.~Castillo, E.~Witrant, C.~Prieur, and L.~Dugard, ``Boundary observers for
  linear and quasi-linear hyperbolic systems with application to flow
  control,'' \emph{Automatica}, vol.~49, no.~11, pp. 3180 -- 3188, 2013.

\bibitem{VSK08}
R.~Vazquez, S.~E., and M.~Krstic, ``Magnetohydrodynamic state estimation with
  boundary sensors,'' \emph{Automatica}, vol.~44, no.~10, pp. 2517 -- 2527,
  2008.

\bibitem{VZ09}
J.~Valein and E.~Zuazua, ``Stabilization of the wave equation on {1-D}
  networks,'' \emph{{SIAM} J. Control Optim.}, vol.~48, no.~4, pp. 2771--2797,
  2009.

\bibitem{CA98}
J.-M. Coron and B.~D'Andrea-Novel, ``Stabilization of a rotating body beam
  without damping,'' \emph{Automatic Control, IEEE Transactions on}, vol.~43,
  no.~5, pp. 608--618, May 1998.

\bibitem{GC12}
P.~J. Goulart and S.~Chernyshenko, ``Global stability analysis of fluid flows
  using sum-of-squares,'' \emph{Physica {D}: {N}onlinear Phenomena}, vol. 241,
  no.~6, pp. 692 -- 704, 2012.

\bibitem{FAC03}
N.~H. El-Farra, A.~Armaou, and P.~D. Christofides, ``Analysis and control of
  parabolic {PDE} systems with input constraints,'' \emph{Automatica}, vol.~39,
  no.~4, pp. 715 -- 725, 2003.

\bibitem{CZ95}
R.~F. Curtain and H.~J. Zwart, \emph{An Introduction to Infinite-Dimensional
  Linear Systems Theory}, ser. Texts in Applied Mathematics.\hskip 1em plus
  0.5em minus 0.4em\relax Berlin: Springer-Verlag, 1995, vol.~21.

\bibitem{Mov59}
A.~Movchan, ``The direct method of {L}iapunov in stability problems of elastic
  systems,'' \emph{Journal of Applied Mathematics and Mechanics}, vol.~23,
  no.~3, pp. 686--700, 1959.

\bibitem{Dat70}
R.~Datko, ``Extending a theorem of {A. M. Liapunov} to {H}ilbert space,''
  \emph{Journal of Mathematical Analysis and Applications}, vol.~32, no.~3, pp.
  610 -- 616, 1970.

\bibitem{Str04}
B.~Straughan, \emph{The Energy Method, Stability, and Nonlinear Convection},
  2nd~ed., ser. Applied Mathematical Sciences.\hskip 1em plus 0.5em minus
  0.4em\relax Berlin: Springer-Verlag, 2004, vol.~91.

\bibitem{PPL07}
M.~M. Peet, A.~Papachristodoulou, and S.~Lall, ``Positive forms and stability
  of linear time-delay systems,'' \emph{{SIAM} J. Control and Optimization},
  vol.~47, no.~6, pp. 3237---3258, 2007.

\bibitem{VTG13}
G.~Valmorbida, S.~Tarbouriech, and G.~Garcia, ``Design of polynomial control
  laws for polynomial systems subject to actuator saturation,'' \emph{Automatic
  Control, IEEE Transactions on}, vol.~58, no.~7, pp. 1758--1770, July 2013.

\bibitem{PPW04}
S.~Prajna, A.~Papachristodoulou, and F.~Wu, ``Nonlinear control synthesis by
  sum of squares optimization: a {L}yapunov-based approach,'' in \emph{5'th
  {A}sian {C}ontrol {C}onference}, vol.~1, 2004, pp. 157--165.

\bibitem{TPSB10}
U.~Topcu, A.~K. Packard, P.~Seiler, and G.~Balas, ``Robust region-of-attraction
  estimation,'' \emph{Automatic Control, IEEE Transactions on}, vol.~55, no.~1,
  pp. 137--142, Jan 2010.

\bibitem{Las09}
J.~B. Lasserre, \emph{Moments, Positive Polynomials and Their
  Applications}.\hskip 1em plus 0.5em minus 0.4em\relax Imperial College Press,
  London, 2009.

\bibitem{LHPT08}
J.~B. Lasserre, D.~Henrion, C.~Prieur, and E.~Trelat, ``Nonlinear optimal
  control via occupation measures and {LMI} relaxations.'' \emph{{SIAM} J.
  Control and Optimization}, vol.~47, no.~4, pp. 1643---1666, 2008.

\bibitem{HK14}
D.~Henrion and M.~Korda, ``Convex computation of the region of attraction of
  polynomial control systems,'' \emph{Automatic Control, IEEE Transactions on},
  vol.~59, no.~2, pp. 297--312, Feb 2014.

\bibitem{CLR95}
M.~Choi, T.~Lam, and B.~Reznick, ``Sums of squares of real polynomials,'' in
  \emph{Symposia in {P}ure {M}athematics}, vol.~58, no.~2, 1995, pp. 103--126.

\bibitem{PP06}
A.~Papachristodoulou and M.~Peet, ``On the analysis of systems described by
  classes of partial differential equations,'' in \emph{Decision and Control,
  2006 45th IEEE Conference on}, 2006, pp. 747--752.

\bibitem{Put93}
M.~Putinar, ``Positive polynomials on compact semi-algebraic sets.''
  \emph{Indiana Univ. Math. J.}, vol.~42, no.~3, pp. 969--984, 1993.

\bibitem{KS08}
M.~Krsti\'{c} and A.~Smyshlyaev, \emph{Boundary control of {PDEs} : a course on
  backstepping designs}, ser. Advances in design and control.\hskip 1em plus
  0.5em minus 0.4em\relax Philadelphia, PA: Society for Industrial and Applied
  Mathematics, 2008.

\bibitem{McO96}
R.~McOwen, \emph{Partial Differential Equations - Methods and
  Applications}.\hskip 1em plus 0.5em minus 0.4em\relax New Jersey:
  Prentice-Hall, 1996.

\bibitem{PW60}
L.~Payne and H.~Weinberger, ``An optimal {P}oincare inequality for convex
  domains,'' \emph{Archive for Rational Mechanics and Analysis}, vol.~5, no.~1,
  pp. 286--292, 1960.

\bibitem{Par00}
P.~Parrilo, ``Structured semidefinite programs and semialgebraic geometry
  methods in robustness and optimization,'' Ph.D. dissertation, California
  Institute of Technology, 2000.

\bibitem{CTVG99}
G.~Chesi, A.~Tesi, A.~Vicino, and R.~Genesio, ``On convexification of some
  minimum distance problems,'' in \emph{5th {E}uropean {C}ontrol {C}onference},
  Karlsruhe, Germany, 1999.

\bibitem{NS08}
M.~Nie, J.and~Schweighofer, ``On the complexity of {P}utinar's
  positivstellensatz,'' \emph{Journal of Complexity}, vol.~23, no.~1, pp.
  135--150, 2007.

\end{thebibliography}
\bibliographystyle{IEEEtran}


\appendix

\subsection{Sum-of-Squares Polynomials}
A polynomial $p(x)$ is a sum-of-squares polynomial if $\exists p_i(x) \in \mathcal{R}[x]$, $i \in \{1, \ldots, n_d\}$ such that $p(x) = \sum_i p_i^2(x)$. Hence $p(x)$ is clearly non-negative. A set of polynomials $p_i$ is called \emph{SOS decomposition} of $p(x)$. The converse does not hold in general, that is, there exist non-negative polynomials which do not have an SOS decomposition~\cite{Par00}.  The computation of SOS decompositions, can be cast as an SDP (see~\cite{CLR95,Par00,CTVG99}). The Theorem below proves that, in sets satisfying a property stronger than compactness, any positive polynomial can be expressed as a combination of sum-of-squares polynomials and polynomials describing the set.  

For a set of polynomials $\bar{g} = \{g_1(x), \ldots, g_m(x)\}$, $m \in \nt$, the \emph{quadratic module} generated by $m$ is 
\begin{equation}
M(\bar{g}):= \left\lbrace \sigma_0 +\sum_{i = 1}^{m} \sigma_i g_i | \sigma_i \in \Sigma[x]\right\rbrace.
\end{equation}
A quadratic module $M\in \mathcal{R}[x]$ is said \emph{archimedean} if $\exists N \in \nt $ such that $$N - \|x\|_2^2 \in M.$$
An archimedian set is always compact~\cite{NS08}. It is the possible to state~\cite[Theorem 2.14]{Las09}
\begin{theorem}[Putinar Positivstellensatz]
\label{thm:Psatz}
Suppose the quadratic module $M(\bar{g})$ is archimedian. Then for every $f \in \mathcal{R}[x]$, $$f>0~\forall~x\in \{x | g_1(x)\geq 0, \ldots, g_m(x)\geq 0 \} \Rightarrow f \in (\bar{g}).$$
\end{theorem}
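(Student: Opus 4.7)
The plan is to prove Putinar's Positivstellensatz by contradiction, combining a Hahn--Banach separation argument with a Riesz-type measure representation. Suppose, for contradiction, that $f > 0$ on $K := \{x : g_1(x) \geq 0, \ldots, g_m(x) \geq 0\}$ but $f \notin M(\bar{g})$. Since $M(\bar{g})$ is a convex cone in $\mathcal{R}[x]$ not containing $f$, an Eidelheit-type geometric separation (applied after restricting to degree-$d$ truncations and then letting $d \to \infty$, or directly in a suitable locally convex topology) produces a nonzero linear functional $L : \mathcal{R}[x] \to \real$ with $L(q) \geq 0$ for every $q \in M(\bar{g})$ and $L(f) \leq 0$.

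Next I would use the archimedean hypothesis to promote $L$ to a continuous functional. From $N - \|x\|_2^2 \in M(\bar{g})$ and positivity of $L$ on $M(\bar{g})$ we get $L(\|x\|_2^2) \leq N \cdot L(1)$. More generally, for every polynomial $p$ there is $c_p > 0$ with $c_p - p^2 \in M(\bar{g})$, hence $L(p^2) \leq c_p L(1)$. In particular $L(1) > 0$ (otherwise Cauchy--Schwarz for the positive semi-definite bilinear form $(p,q) \mapsto L(pq)$ forces $L \equiv 0$), so we may normalize $L(1) = 1$. Combining these bounds yields a uniform estimate $|L(p)| \leq C \sup_{x \in B} |p(x)|$ on the closed ball $B = \{x : \|x\|_2^2 \leq N\}$. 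By Stone--Weierstrass, polynomials are dense in $C(B)$, so $L$ extends to a continuous positive linear functional on $C(B)$.

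The Riesz--Markov representation theorem then yields a Borel probability measure $\mu$ on $B$ such that $L(p) = \int_B p \, d\mu$ for all $p \in \mathcal{R}[x]$. To pin down the support, note that $g_i p^2 \in M(\bar{g})$ for every $p$, so $\int g_i p^2 \, d\mu \geq 0$; approximating indicator functions of open subsets of $\{g_i < 0\}$ by squares of polynomials shows $\mu(\{g_i < 0\}) = 0$, hence $\mathrm{supp}(\mu) \subseteq K$. Since $f > 0$ on the compact set $K$, there exists $\delta > 0$ with $f \geq \delta$ on $K$, giving $L(f) = \int_K f \, d\mu \geq \delta > 0$, which contradicts $L(f) \leq 0$.

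The main obstacle is the measure-representation step: producing $\mu$ with support precisely in $K$ from purely algebraic data. This is the Haviland / Kadison--Dubois content of the proof, and it is exactly where the \emph{archimedean} hypothesis (strictly stronger than mere compactness of $K$) is indispensable---without it one only recovers Schm\"udgen's weaker representation involving products $g_{i_1}\cdots g_{i_k}$. A secondary technicality is justifying the separating functional in the infinite-dimensional space $\mathcal{R}[x]$; this is handled either by truncating to bounded degree and extracting a subsequential limit using the uniform bound above, or by invoking Hahn--Banach in a suitable locally convex completion.
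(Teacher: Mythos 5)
The paper does not actually prove this statement: Theorem~\ref{thm:Psatz} is quoted verbatim from the cited reference (Lasserre, Theorem~2.14) and used as a black box to justify Corollary~\ref{cor:psatz}, so there is no in-paper proof to compare against. Your sketch is the standard functional-analytic proof of Putinar's theorem (separation of the convex cone $M(\bar g)$ from $f$, positivity of the resulting functional, Haviland/Riesz representation, support localisation via $g_i p^2 \in M(\bar g)$, and the final contradiction $\int_K f\,d\mu \geq \delta > 0$), which is also essentially the argument in the cited source, so the approach is the right one and the overall architecture is sound.

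Two places in your outline deserve tightening. First, the separation step: the clean way to get $L$ without any truncation-and-limit argument is to observe that archimedeanity makes $1$ an \emph{order unit} for $M(\bar g)$ (for every $p$ there is $N$ with $N \pm p \in M(\bar g)$, proved by showing the set of such $p$ is a subring containing each coordinate), hence $1$ is an algebraic interior point of the cone and the Eidelheit--Kakutani theorem applies directly in $\mathcal{R}[x]$ with no topology. Second, the inequality $L(p^2) \leq c_p L(1)$ with $c_p$ the archimedean constant does \emph{not} immediately yield $|L(p)| \leq C \sup_{B}|p|$, since $c_p$ is an algebraic certificate and need not equal $\sup_B |p|^2$; the standard repairs are either to bound the moments $|L(x^\alpha)| \leq N^{|\alpha|/2}L(1)$ via Cauchy--Schwarz and invoke Haviland or determinacy of the moment problem on the ball, or to run the GNS construction and use the spectral theorem for the bounded self-adjoint multiplication operators. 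You correctly flag the measure-representation step as the crux, so these are acknowledged rather than hidden gaps, but as written the extension to $C(B)$ is the one step that would not survive a line-by-line check.
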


\begin{lemma}The set $\Omega = [0,1]$ is Archimedean.
\end{lemma}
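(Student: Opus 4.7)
The statement to prove is that the quadratic module associated with $\Omega = [0,1]$, namely $M(\bar{g})$ with $\bar{g} = \{g_1(x)\} = \{x(1-x)\}$, is archimedean in the sense of the appendix. The plan is to exhibit an explicit integer $N$ and explicit sum-of-squares multipliers $\sigma_0,\sigma_1 \in \Sigma[x]$ such that
\[
N - x^2 = \sigma_0(x) + \sigma_1(x)\,g_1(x),
\]
since on the real line $\|x\|_2^2 = x^2$.

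My plan is to try the smallest reasonable candidate, $N = 1$. Because $x \in [0,1]$ entails $x^2 \le 1$, the polynomial $1 - x^2$ is non-negative on $\Omega$, so a Putinar-type certificate should exist; the game is to produce one with low degree. A natural ansatz is to take $\sigma_1$ to be a non-negative constant $c$ and ask whether $1 - x^2 - c\,x(1-x)$ is itself SOS. Expanding gives $1 - cx + (c-1)x^2$, whose discriminant in $x$ is $(c-2)^2$, vanishing precisely at $c = 2$. At $c = 2$ the remainder becomes $1 - 2x + x^2 = (1-x)^2$, which is manifestly SOS.

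I would therefore write
\[
1 - x^2 = (1-x)^2 + 2\,x(1-x),
\]
verify the identity by direct expansion, $(1-x)^2 + 2x(1-x) = (1-x)(1-x + 2x) = (1-x)(1+x) = 1 - x^2$, and note that $\sigma_0(x) = (1-x)^2 \in \Sigma[x]$ and $\sigma_1(x) = 2 = (\sqrt{2})^2 \in \Sigma[x]$. This shows $1 - x^2 \in M(\bar{g})$, so the archimedean condition holds with $N = 1$, completing the proof.

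There is no real obstacle here: the only place a search is required is in matching the coefficient $c$, and that reduces to a one-parameter discriminant condition that is immediately solvable. The value of the lemma is purely to justify invoking Theorem~\ref{thm:Psatz} for $\Omega = [0,1]$, which is exactly the setting used in Corollary~\ref{cor:psatz} and throughout Section~\ref{sec:domainpositivity}.
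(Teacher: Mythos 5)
Your proof is correct and takes essentially the same route as the paper: both exhibit an explicit certificate $N - x^2 = \sigma_0 + \sigma_1\, x(1-x)$ with $\sigma_0,\sigma_1$ sums of squares. In fact your identity $1 - x^2 = (1-x)^2 + 2x(1-x)$ is precisely the special case $r=2$, $N^*=1$ of the one-parameter family of certificates the paper writes down, so nothing further is needed.
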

Take any pair $(r,N^{*})$, $r\in \real_{>0}$ and $N^{*}\in \mathbb{N}$ satisfying 
\begin{equation}
N^{*} \geq \frac{1}{4}\frac{r^2}{r-1}.
\end{equation}
The Archimedean property is the satisfied with 
\begin{equation*}
\begin{array}{rcl}
\theta_0(\sigma) &= & \left(\left( \sqrt{r-1}\right)\sigma - \frac{1}{2}\frac{r}{\sqrt{r-1}} \right)^2\\
 &&  + \left( N^{*}-\frac{1}{4} \frac{r^2}{(r-1)}\right)\\
\theta_1(\sigma) &= &r.
\end{array}\\
\end{equation*}

\subsection{Stability Bounds for the Heat Equation with Reaction Term} \label{app2}
The stability bound on parameter $\lambda$ is obtained by constructing the solution to
\begin{equation} \label{eq:heat_reac}
u_t = u_{xx} + \lambda u, \quad \forall x \in [0,1]  \quad u(0)=u(1) =0
\end{equation}
Assuming separation of variables for the solution, a candidate solution can be written as
\begin{equation} \label{eq:solsep}
u(x,t) = X(x)T(t).
\end{equation}
Substituting~\eqref{eq:solsep} in~\eqref{eq:heat_reac}, one obtains $T_t X = X_{xx}T + \lambda XT$,  that is,
\begin{equation} \label{eq:solsep2}
\frac{T_t}{T} = \frac{X_{xx}+\lambda X}{X}.
\end{equation}
The left hand side of~\eqref{eq:solsep2} is only a function of $t$, and the right hand side, a function of $x$. Consequently,  
\begin{equation} \label{eq:solsep3}
\frac{T_t}{T} = \frac{X_{xx}+\lambda X}{X} = k
\end{equation}
for some constant $k$. It can be verified, using the boundary conditions, that the parameter $k$ should be positive for~\eqref{eq:heat_reac} to have a non-trivial solution, yielding
\begin{equation}
X_{xx} + (\lambda - k)X = 0,
\end{equation}
of which the solution is $X(x) = c_1 \sin (\sqrt{\lambda - k}x) + c_2 \cos (\sqrt{\lambda-k}x)$. Employing the boundary conditions of~\eqref{eq:heat_reac}, one obtains $c_2 = 0$ and
\begin{multline}
c_1 \sin (\sqrt{\lambda - k}) = 0 \Rightarrow \sqrt{\lambda - k} = n\pi  \\ \Rightarrow k = \lambda - n^2 \pi^2, \quad n \in \nt.
\end{multline}
Then, it follows that from~\eqref{eq:solsep3} one has
$$
\frac{T_t}{T} = \lambda - n^2 \pi^2 \Rightarrow T(t) = e^{-(n^2\pi^2 - \lambda)t}, \quad n \in \nt.
$$
Therefore, for the system to be stable, the following must hold
$$
n^2\pi^2 - \lambda > 0, \quad n  \in \nt,
$$
that is, $\lambda < \pi^2$.

\subsection{Lyapunov function for the transport equation}
\label{app:transport}
Consider the system
\begin{equation}
u_t = - u_{x} \quad \Omega = \left(0,1\right) \quad u(0) = 0,
\end{equation}
and the candidate Lyapunov function  of the form $$E_p = \dfrac{1}{2} \int_{\Omega}e^{-\lambda x} u^{2}(x)dx.$$
One obtains
\begin{equation}
\begin{array}{rcl}
E_{pt} & = &   \int_{\Omega}e^{-\lambda x} uu_tdx \\
& = &  - \int_{\Omega}e^{-\lambda x} uu_xdx \\
& = &  -  \frac{1}{2} \int_{\Omega} \left[ \frac{d}{dx}\left(e^{-\lambda x} u^{2}\right)  + \lambda  e^{-\lambda x} u^{2} \right] dx \\
& = &  -  \frac{1}{2} \int_{\Omega}  \frac{d}{dx}\left(e^{-\lambda x} u^{2}\right) dx - \frac{\lambda}{2}  \int_{\Omega} e^{-\lambda x} u^{2} dx\\
& = &  -  \frac{1}{2} \left[ e^{-\lambda} u^{2}(1) - u^{2}(0)   \right]- \frac{\lambda}{2}  \int_{\Omega} e^{-\lambda x} u^{2} dx\\
& = &  -  \frac{1}{2}  e^{-\lambda} u^{2}(1)   - \frac{\lambda}{2}  \int_{\Omega} e^{-\lambda x} u^{2} dx\\
& \leq &   - \frac{\lambda}{2}  \int_{\Omega} e^{-\lambda x} u^{2} dx.
\end{array}
\end{equation}
That is, $E_{pt}  \leq -\lambda E_p$, which proves the exponential stability of the zero solution.

%


\end{document}